\theoremstyle{plain}
\newtheorem{thm}{Theorem}[section]
\newtheorem{lem}[thm]{Lemma}
\theoremstyle{definition}
\theoremstyle{remard}
 \font\cyr=wncyr10
 \newcommand{\nc}{\newcommand}
\DeclareMathOperator{\Sym}{{Sym}}
\nc{\per}[1]{\underset{#1}{\boldsymbol \pi}\,}
 \nc{\MT}{{\rm MT}}
 \nc{\XX}{{X}}
 \nc{\gF}{{\varPhi}}
 \nc{\ot}{\otimes}
 \nc{\tG}{\tilde{G}}
 \nc{\wht}{\widehat}
 \nc{\bwg}{{\bigwedge}}
 \nc{\wg}{{\wedge}}
 \nc{\mmu}{{\boldsymbol{\mu}}}
 \nc{\mal}{{{\scriptstyle \maltese}}}
 \nc{\fA}{{\mathfrak A}}
 \nc{\HH}{{\mathfrak H}}
 \nc{\ra}{\rightarrow}
 \nc{\ors}{{\bfs}}
 \nc{\orr}{{\bfr}}
 \nc{\os}{{\overset}}
 \nc{\G}{{\mathbb G}}
 \nc{\F}{{\mathbb F}}
 \nc{\Z}{{\mathbb Z}}
 \nc{\R}{{\mathbb R}}
 \nc{\N}{{\mathbb N}}
 \nc{\ZN}{{\mathbb Z_{\ge 0}}}
 \nc{\Q}{{\mathbb Q}}
 \nc{\C}{{\mathbb C}}
 \nc{\CP}{{\mathbb{CP}}}
 \nc{\Cnn}{{\mathbb C}_{\ge 0}}
 \nc{\Cp}{{\mathbb C}_{>0}}
 \nc{\MPV}{{\mathcal{MPV}}}
 \nc{\tB}{{\tilde B}}
\nc{\gemn}{{\mathfrak n}}
 \nc{\suf}{{\ast\,}}
 \nc{\sufq}{{\ast_q\,}}
 \nc{\gam}{{\gamma}}
 \nc{\gG}{{\Gamma}}
 \nc{\om}{{\omega}}
 \nc{\vep}{{\varepsilon}}
 \nc{\ga}{{\alpha}}
 \nc{\gl}{{\lambda}}
 \nc{\gb}{{\beta}}
 \nc{\gf}{{\varphi}}
 \nc{\gd}{{\delta}}
 \nc{\orgd}{{\vec \gd\,}}
 \nc{\gs}{{\sigma}}
 \nc{\gth}{{\theta}}
 \nc{\gS}{{\Sigma}}
 \nc{\gk}{{\kappa}}
  \nc{\gz}{{\zeta}}
 \nc{\tgz}{{\tilde{\zeta}}}
 \nc{\gO}{{\Omega}}
 \nc{\sif}{{\mathcal S}}
 \nc{\gt}{{\tau}}
 \nc{\Lra}{\Longrightarrow}
 \nc{\lra}{\longrightarrow}
 \nc{\lmaps}{\longmapsto}
 \nc{\fS}{{\mathfrak S}}
 \nc{\DD}{{\mathfrak D}}
 \nc{\Llra}{\Longleftrightarrow}
 \nc{\ol}{\overline}
 \nc{\ola}{\overleftarrow}
 \nc{\lms}{\longmapsto}
 \nc{\cv}{{{\mathsf c}{\mathsf v}}}
 \nc{\zq}{{\zeta_q}}
 \nc\qup{{q\uparrow 1}}
 \nc{\us}{\underset}
 \nc{\tn}{{\tilde{n}}}
 \nc{\gD}{{\Delta}}
 \nc{\bi}{{\bf i}}
 \nc{\bfone}{{\bf 1}}
 \nc{\bfa}{{\bf a}}
 \nc{\bfb}{{\bf b}}
 \nc{\bfc}{{\bf c}}
 \nc{\bfd}{{\bf d}}
 \nc{\bfe}{{\bf e}}
 \nc{\bff}{{\bf f}}
 \nc{\bfg}{{\bf g}}
 \nc{\bfi}{{\bf i}}
 \nc{\bfj}{{\bf j}}
\nc{\obfi}{{\overrightarrow{\boldsymbol \imath}}}
\nc{\obfj}{{\overrightarrow{\boldsymbol \jmath}}}
\nc{\obfd}{{\overrightarrow{\bf d}}}
\nc{\veps}{{\varepsilon}}
 \nc{\bfn}{{\bf n}}
 \nc{\bfl}{{\bf l}}
 \nc{\bfk}{{\bf k}}
 \nc{\bfm}{{\bf m}}
 \nc{\bfo}{{\bf o}}
 \nc{\bfp}{{\bf p}}
 \nc{\bfq}{{\bf q}}
 \nc{\bfr}{{\bf r}}
 \nc{\bfs}{{\bf s}}
 \nc{\bft}{{\bf t}}
 \nc{\bfu}{{\bf u}}
 \nc{\bfv}{{\bf v}}
 \nc{\bfw}{{\bf w}}
 \nc{\bfx}{{\bf x}}
 \nc{\bfy}{{\bf y}}
 \nc{\bfz}{{\bf z}}
 \nc{\bfB}{{\bf B}}
 \nc{\bfP}{{\bf P}}
 \nc{\bfQ}{{\bf Q}}
 \nc{\bfY}{{\bf Y}}
 \nc{\bfgb}{{\boldsymbol \gb}}
 \nc{\bfga}{{\boldsymbol \ga}}
 \nc{\bfrho}{{\boldsymbol \rho}}
 \nc{\bfchi}{{\boldsymbol \chi}}
 \nc{\QX}{{\Q\langle \bfX\rangle}}
 \nc{\QY}{{\Q\langle \bfY\rangle}}
 \nc{\CX}{{\C\langle \bfX\rangle}}
 \nc{\CY}{{\C\langle \bfY\rangle}}
 \nc{\QXX}{{\Q\langle\!\langle \bfX\rangle\!\rangle}}
 \nc{\QYY}{{\Q\langle\!\langle \bfY\rangle\!\rangle}}
 \nc{\CXX}{{\C\langle\!\langle \bfX\rangle\!\rangle}}
 \nc{\CYY}{{\C\langle\!\langle \bfY\rangle\!\rangle}}
 \nc{\bbA}{{\mathbb A}}
 \nc{\bbB}{{\mathbb B}}
 \nc{\bbC}{{\mathbb C}}
 \nc{\bbD}{{\mathbb D}}
 \nc{\bbE}{{\mathbb E}}
 \nc{\bbF}{{\mathbb F}}
 \nc{\bbG}{{\mathbb G}}
 \nc{\bbH}{{\mathbb H}}
 \nc{\bbI}{{\mathbb I}}
 \nc{\bbJ}{{\mathbb J}}
 \nc{\bbK}{{\mathbb K}}
 \nc{\bbL}{{\mathbb L}}
 \nc{\bbM}{{\mathbb M}}
 \nc{\bbN}{{\mathbb N}}
 \nc{\bbO}{{\mathbb O}}
 \nc{\bbP}{{\mathbb P}}
 \nc{\bbQ}{{\mathbb Q}}
 \nc{\bbR}{{\mathbb R}}
 \nc{\bbS}{{\mathbb S}}
 \nc{\bbT}{{\mathbb T}}
 \nc{\bbU}{{\mathbb U}}
 \nc{\bbV}{{\mathbb V}}
 \nc{\bbW}{{\mathbb W}}
 \nc{\bbX}{{\mathbb X}}
 \nc{\bbY}{{\mathbb Y}}
 \nc{\bbZ}{{\mathbb Z}}
 \nc{\bba}{{\mathbb a}}
 \nc{\bbb}{{\mathbb b}}
 \nc{\bbc}{{\mathbb c}}
 \nc{\bbd}{{\mathbb d}}
 \nc{\bbe}{{\mathbb e}}
 \nc{\bbf}{{\mathbb f}}
 \nc{\bbg}{{\mathbb g}}
 \nc{\bbh}{{\mathbb h}}
 \nc{\bbi}{{\mathbb i}}
 \nc{\bbk}{{\mathbb k}}
 \nc{\bbl}{{\mathbb l}}
 \nc{\bbm}{{\mathbb m}}
 \nc{\bbn}{{\mathbb n}}
 \nc{\bbo}{{\mathbb o}}
 \nc{\bbp}{{\mathbb p}}
 \nc{\bbq}{{\mathbb q}}
 \nc{\bbr}{{\mathbb r}}
 \nc{\bbs}{{\mathbb s}}
 \nc{\bbt}{{\mathbb t}}
 \nc{\bbu}{{\mathbb u}}
 \nc{\bbv}{{\mathbb v}}
 \nc{\bbw}{{\mathbb w}}
 \nc{\bbx}{{\mathbb x}}
 \nc{\bby}{{\mathbb y}}
 \nc{\bbz}{{\mathbb z}}
 \nc{\MZV}{{\mathcal{MZV}}}
 \nc{\calA}{{\mathcal A}}
 \nc{\calB}{{\mathcal B}}
 \nc{\calC}{{\mathcal C}}
 \nc{\calD}{{\mathcal D}}
 \nc{\calE}{{\mathcal E}}
 \nc{\calF}{{\mathcal F}}
 \nc{\calG}{{\mathcal G}}
 \nc{\calH}{{\mathcal H}}
 \nc{\calI}{{\mathcal I}}
 \nc{\calJ}{{\mathcal J}}
 \nc{\calK}{{\mathcal K}}
 \nc{\calL}{{\mathcal L}}
 \nc{\calM}{{\mathcal M}}
 \nc{\calN}{{\mathcal N}}
 \nc{\calO}{{\mathcal O}}
 \nc{\calP}{{\mathcal P}}
 \nc{\calQ}{{\mathcal Q}}
 \nc{\calR}{{\mathcal R}}
 \nc{\calS}{{\mathcal S}}
 \nc{\calT}{{\mathcal T}}
 \nc{\calU}{{\mathcal U}}
 \nc{\calV}{{\mathcal V}}
 \nc{\calW}{{\mathcal W}}
 \nc{\calX}{{\mathcal X}}
 \nc{\calY}{{\mathcal Y}}
 \nc{\calZ}{{\mathcal Z}}
  \nc{\cala}{{\mathcal a}}
 \nc{\calb}{{\mathcal b}}
 \nc{\calc}{{\mathcal c}}
 \nc{\cald}{{\mathcal d}}
 \nc{\cale}{{\mathcal e}}
 \nc{\calf}{{\mathcal f}}
 \nc{\calg}{{\mathcal g}}
 \nc{\calh}{{\mathcal h}}
 \nc{\cali}{{\mathcal i}}
 \nc{\calj}{{\mathcal j}}
 \nc{\calk}{{\mathcal k}}
 \nc{\call}{{\mathcal l}}
 \nc{\calm}{{\mathcal m}}
 \nc{\caln}{{\mathcal n}}
 \nc{\calo}{{\mathcal o}}
 \nc{\calp}{{\mathsf p}}
 \nc{\calq}{{\mathcal q}}
 \nc{\calr}{{\mathcal r}}
 \nc{\cals}{{\mathcal s}}
 \nc{\calt}{{\mathcal t}}
 \nc{\calu}{{\mathcal u}}
 \nc{\calv}{{\mathcal v}}
 \nc{\calw}{{\mathcal w}}
 \nc{\calx}{{\mathcal x}}
 \nc{\caly}{{\mathcal y}}
 \nc{\calz}{{\mathcal z}}
 \nc{\frakA}{{\mathfrak A}}
 \nc{\frakB}{{\mathfrak B}}
 \nc{\frakC}{{\mathfrak C}}
 \nc{\frakD}{{\mathfrak D}}
 \nc{\frakE}{{\mathfrak E}}
 \nc{\frakF}{{\mathfrak F}}
 \nc{\frakG}{{\mathfrak G}}
 \nc{\frakH}{{\mathfrak H}}
 \nc{\frakI}{{\mathfrak I}}
 \nc{\frakJ}{{\mathfrak J}}
 \nc{\frakK}{{\mathfrak K}}
 \nc{\frakL}{{\mathfrak L}}
 \nc{\frakM}{{\mathfrak M}}
 \nc{\frakN}{{\mathfrak N}}
 \nc{\frakO}{{\mathfrak O}}
 \nc{\frakP}{{\mathfrak P}}
 \nc{\frakQ}{{\mathfrak Q}}
 \nc{\frakR}{{\mathfrak R}}
 \nc{\frakS}{{\mathfrak S}}
 \nc{\frakT}{{\mathfrak T}}
 \nc{\frakU}{{\mathfrak U}}
 \nc{\frakV}{{\mathfrak V}}
 \nc{\frakW}{{\mathfrak W}}
 \nc{\frakX}{{\mathfrak X}}
 \nc{\frakY}{{\mathfrak Y}}
 \nc{\frakZ}{{\mathfrak Z}}
 \nc{\fraka}{{\mathfrak a}}
 \nc{\frakb}{{\mathfrak b}}
 \nc{\frakc}{{\mathfrak c}}
 \nc{\frakd}{{\mathfrak d}}
 \nc{\frake}{{\mathfrak e}}
 \nc{\frakf}{{\mathfrak f}}
 \nc{\frakg}{{\mathfrak g}}
 \nc{\frakh}{{\mathfrak h}}
 \nc{\fraki}{{\mathfrak i}}
 \nc{\frakj}{{\mathfrak j}}
 \nc{\frakk}{{\mathfrak k}}
 \nc{\frakl}{{\mathfrak l}}
 \nc{\frakm}{{\mathfrak m}}
 \nc{\frakn}{{\mathfrak n}}
 \nc{\frako}{{\mathfrak o}}
 \nc{\frakp}{{\mathfrak p}}
 \nc{\frakq}{{\mathfrak q}}
 \nc{\frakr}{{\mathfrak r}}
 \nc{\fraks}{{\mathfrak s}}
 \nc{\frakt}{{\mathfrak t}}
 \nc{\fraku}{{\mathfrak u}}
 \nc{\frakv}{{\mathfrak v}}
 \nc{\frakw}{{\mathfrak w}}
 \nc{\frakx}{{\mathfrak x}}
 \nc{\fraky}{{\mathfrak y}}
 \nc{\frakz}{{\mathfrak z}}
 \nc{\so}{{\mathfrak so}}
 \nc{\sa}{{\mbox{{\scriptsize \cyr x}}}}
 \nc{\slfour}{{\mathfrak sl}_4}
 \nc{\one}{{\bf 1}}
 \nc{\zero}{{\bf 0}}
 \nc{\Qxy}{\Q\langle x,y\rangle}
\nc{\pD}[2]{\frac{\partial{#1}}{\partial{#2}}}
\begin{document}
\title{Sum Formula of Multiple Hurwitz-Zeta Values}
\author{Jianqiang Zhao}
\date{}
\maketitle

\begin{center}
 {Department of Mathematics, Eckerd College, St. Petersburg, FL 33711}
\end{center}

\begin{abstract}
Let $s_1,\dots,s_d$ be $d$ positive integers and define the multiple $t$-values
of depth $d$ by
\begin{equation*}
    t(s_1,\dots,s_d)=\sum_{n_1>\cdots>n_d\ge 1}
    \frac{1}{(2n_1-1)^{s_1}\cdots(2n_d-1)^{s_d}},
\end{equation*}
which is equal to the multiple Hurwitz-zeta value
$2^{-w}\gz(s_1,\dots,s_d;-\frac12,\dots,-\frac12)$ where
$w=s_1+\cdots+s_d$ is called the weight.
For $d\le n$, let $T(2n,d)$ be the sum of all multiple $t$-values
with even arguments whose weight is $2n$ and whose
depth is $d$.  Recently Shen and Cai
gave formulas for $T(2n,d)$ for $d\le 5$ in terms of $t(2n)$, $t(2)t(2n-2)$
and $t(4)t(2n-4)$. In this short note we generalize Shen-Cai's results to
arbitrary depth by using the theory of symmetric functions
established by Hoffman.
\end{abstract}

\section{Introduction}
In recent years multiple zeta functions and many different variations and generalizations
have been studied intensively due to their close relations to other objects in a lot of diverse
branches of mathematics and physics. In particular, a large number of identities are
establishes between their special values. In \cite{ShenCai2012} Shen and Cai found a few very interesting
equations which are similar in nature to Euler's identity of double zeta values. They
gave formulas of the sum $E(2n,d)$ of multiple zeta values at even arguments of
fixed depth $d$ and weight $2n$, for $d\le 4$. These have been generalized to
arbitrary depth by Hoffman \cite{HEnk}. In \cite{ShenCai2011}
Shen and Cai turned to the following values
\begin{equation*}
    t(s_1,\dots,s_d)=\sum_{n_1>\cdots>n_d\ge 1}
    \frac{1}{(2n_1-1)^{s_1}\cdots(2n_d-1)^{s_d}},
\end{equation*}
which we call multiple $t$-values of depth $d$ in this note. It is clear that this is equal to
$2^{-w}\gz(s_1,\dots,s_d;-\frac12,\dots,-\frac12)$ where $w=s_1+\cdots+s_d$ is called the weight.
Put
\begin{equation*}
 T(2n,d)=\sum_{\substack{j_1+\cdots+j_d=n\\ j_1,\dots,j_d\ge 1}} t(2j_1,\dots,2j_d).
\end{equation*}
Using similar but more complicated ideas from \cite{ShenCai2012} Shen and Cai gave a
few sum formulas for $T(2n,d)$ for $d\le 5$ in \cite{ShenCai2011}. For example,
\begin{equation}\label{equ:Tn5}
 T(2n,5)= \frac{7}{128}t(2n)-\frac{3}{64}t(2)t(2n-2)+\frac{1}{320}t(4)t(2n-4).
\end{equation}

In this note, we shall generalize
these to arbitrary depth using ideas from \cite{HEnk} where Hoffman applied
the theory of symmetric functions to study the generating function of $E(2n,d)$.
It turns out that we need both Bernoulli numbers $B_j$ and Euler numbers $E_j$
defined by the following generating functions respectively:
\begin{equation} \label{equ:EulerNumber}
 \frac{x}{e^x-1}=\sum_{j=0}^\infty B_j\frac{x^j}{j!}, \qquad
 \sec x=\sum_{j=0}^\infty (-1)^{j}E_{2j} \frac{x^{2j}}{ (2j)!},
\end{equation}
and the Euler numbers $E_{2j+1}=0$ for all $j\ge 0$.

Our main results are the following theorems.
\begin{thm} \label{thm:T2nd}
For $d\le n$,
\begin{equation*}
    T(2n,d)= \sum_{j=0}^{\lfloor \frac{d-1}2\rfloor}
\frac{(-1)^j\pi^{2j}}{2^{2d-2} (2j)! d} \binom{2d-2j-2}{d-1} t(2n-2j),
\end{equation*}
where $t(2j)=2^{-2j}(2^{2j}-1)\gz(2j)$. Or, equivalently,
\begin{equation*}
T(2n,d)=\binom{2d-2}{d-1}\frac{t(2n)}{2^{2d-2}d} -\sum_{j=1}^{\lfloor \frac{d-1}2\rfloor}
\binom{2d-2j-2}{d-1}  \frac{t(2j)t(2n-2j)}{2^{2d-3}(2^{2j}-1)B_{2j} d} .
\end{equation*}
\end{thm}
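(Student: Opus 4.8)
The plan is to follow Hoffman's symmetric-function method. Write $x_k=1/(2k-1)$ and regard the multiple $t$-values as specializations of symmetric functions in the $x_k$. First I would swap the order of summation in the definition of $T(2n,d)$: since $t(2a_1,\dots,2a_d)=\sum_{k_1>\cdots>k_d\ge1}\prod_i x_{k_i}^{2a_i}$, summing over all compositions $a_1+\cdots+a_d=n$ with $a_i\ge1$ and forming the generating series in $u$ gives
\[
\sum_{n\ge d}T(2n,d)\,u^n=\sum_{k_1>\cdots>k_d}\prod_{i=1}^d\frac{x_{k_i}^2u}{1-x_{k_i}^2u}=e_d(y_1,y_2,\dots),\qquad y_k=\frac{x_k^2u}{1-x_k^2u},
\]
the $d$-th elementary symmetric function in the $y_k$. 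Using $\sum_d e_d z^d=\prod_k(1+zy_k)$ together with the Weierstrass product $\cos(\tfrac{\pi}2 w)=\prod_{k\ge1}\bigl(1-\tfrac{w^2}{(2k-1)^2}\bigr)$ collapses the double generating series to a clean closed form,
\[
\sum_{d,n}T(2n,d)\,u^n z^d=\prod_{k\ge1}\frac{1-(1-z)x_k^2u}{1-x_k^2u}=\frac{\cos\bigl(\tfrac{\pi}2\sqrt{(1-z)u}\,\bigr)}{\cos\bigl(\tfrac{\pi}2\sqrt{u}\,\bigr)}.
\]
This is the $t$-value analogue of Hoffman's generating function for $E(2n,d)$, with $\cos$ replacing the function $(\sin\pi w)/(\pi w)$ relevant for ordinary zeta values, because the variables are $1/(2k-1)$ rather than $1/k$.

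Next I would extract the coefficient of $z^d$. Writing $f(u)=\cos(\tfrac{\pi}2\sqrt u)$, Taylor expansion in $z$ gives $[z^d]f((1-z)u)=\frac{(-u)^d}{d!}f^{(d)}(u)$, so
\[
\sum_{n\ge d}T(2n,d)\,u^n=\frac{(-u)^d}{d!}\,\frac{f^{(d)}(u)}{f(u)}.
\]
It is convenient to substitute $c=\tfrac{\pi^2}4u$ and $C(w)=\cos\sqrt w$, so that $f(u)=C(c)$ and the right side becomes $\frac{(-c)^d}{d!}\,C^{(d)}(c)/C(c)$. The single $t$-values enter through the logarithmic derivative: since $\sum_{m\ge1}t(2m)u^m=\tfrac{\pi\sqrt u}4\tan(\tfrac\pi2\sqrt u)$, one checks directly that $L(u):=\sum_{m\ge1}t(2m)u^m=-c\,C'(c)/C(c)$.

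The heart of the argument is to compute $C^{(d)}/C$ explicitly. Here I would use that $C(w)=\cos\sqrt w$ satisfies the second-order ODE $4wC''+2C'+C=0$; reducing higher derivatives modulo this relation yields $C^{(d)}=\alpha_d(w)C+\beta_d(w)C'$ for rational functions $\alpha_d,\beta_d$ determined by a simple first-order recursion. Then
\[
\sum_{n\ge d}T(2n,d)\,u^n=\frac{(-c)^d}{d!}\Bigl(\alpha_d(c)-\beta_d(c)\,\frac{L(u)}c\Bigr),
\]
which already explains why only single $t$-values $t(2n-2j)$, and no higher products, can appear. Obtaining $\beta_d$ in closed form is the step I expect to be the main obstacle, since the recursion itself is unwieldy; the clean way is to pass to the generating function $B(w,s)=\sum_d\beta_d(w)s^d/d!$. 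Because $\alpha_d,\beta_d$ depend only on the ODE, the \emph{second} solution $S(w)=\sin\sqrt w$ obeys the same reduction, so evaluating the two identities $C(w+s)=A\,C+B\,C'$ and $S(w+s)=A\,S+B\,S'$ and solving the resulting $2\times2$ system by Cramer's rule gives the remarkably simple
\[
B(w,s)=2\sqrt w\,\sin\bigl(\sqrt{w+s}-\sqrt w\,\bigr).
\]
Setting $p=\sqrt w$ and $r=\sqrt{w+s}-\sqrt w$, so that $s=r(2p+r)$, Lagrange inversion yields $\beta_d(w)=(d-1)!\,[r^{d-1}]\,2p\cos r\,(2p+r)^{-d}$; expanding $\cos r$ and $(2p+r)^{-d}$ and using the identity $\binom{-d}{\,d-1-2l\,}=(-1)^{d-1}\binom{2d-2l-2}{d-1}$ produces exactly the claimed coefficients.

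Finally I would assemble the pieces. The term $\frac{(-c)^d}{d!}\alpha_d(c)$ is a polynomial in $u$ of degree at most $\lfloor d/2\rfloor<d$ (as one checks from the analogous Lagrange expansion of $A(w,s)=\cos(\sqrt{w+s}-\sqrt w)$), so it contributes nothing to the coefficient of $u^n$ for $n\ge d$. Hence for $n\ge d$ only the $\beta_d$-term survives and, writing $\kappa_d(u)=\frac{(-1)^{d+1}c^{d-1}\beta_d(c)}{d!}$, one reads off
\[
T(2n,d)=[u^n]\bigl(\kappa_d(u)L(u)\bigr)=\sum_{j=0}^{\lfloor (d-1)/2\rfloor}\frac{(-1)^j\pi^{2j}}{2^{2d-2}(2j)!\,d}\binom{2d-2j-2}{d-1}t(2n-2j),
\]
since $[u^n]\bigl(u^jL(u)\bigr)=t(2n-2j)$ whenever $n>j$, which holds throughout the range because $n\ge d>\lfloor(d-1)/2\rfloor\ge j$. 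This is the first formula. The second follows by substituting Euler's evaluation $\gz(2j)=\frac{(-1)^{j-1}(2\pi)^{2j}B_{2j}}{2(2j)!}$, equivalently $\frac{(-1)^j\pi^{2j}}{(2j)!}=-\frac{2\,t(2j)}{(2^{2j}-1)B_{2j}}$, into the terms with $j\ge1$, turning $\pi^{2j}t(2n-2j)$ into the stated products $t(2j)t(2n-2j)$ while leaving the $j=0$ term $\binom{2d-2}{d-1}t(2n)/(2^{2d-2}d)$ unchanged. The only genuinely delicate points are the closed evaluation of $B(w,s)$ and the degree bound on the $\alpha_d$-part; everything else is routine bookkeeping.
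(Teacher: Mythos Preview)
Your argument is correct and follows the same overall architecture as the paper: both derive the two-variable generating function
\[
\Phi(u,v)=\frac{\cos\bigl(\tfrac{\pi}{2}\sqrt{(1-v)u}\bigr)}{\cos\bigl(\tfrac{\pi}{2}\sqrt{u}\bigr)},
\]
extract the coefficient of $v^d$ as $\frac{(-u)^d}{d!}f^{(d)}(u)/f(u)$ with $f(u)=\cos(\tfrac{\pi}{2}\sqrt{u})$, and then identify the $\tan$-factor with the generating series of the single values $t(2m)$.

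The genuine difference lies in how the closed form for the higher derivatives is obtained. The paper does not compute $f^{(d)}$ directly: it uses the identity $D\cos(\pi\sqrt{u})=-\tfrac{\pi^2}{2}\,\tfrac{\sin(\pi\sqrt{u})}{\pi\sqrt{u}}$ to reduce to $D^{d-1}$ of the sinc function, recognises this as Hoffman's $G_{d-1}(u)$ from the multiple-zeta case, and then simply quotes Hoffman's Lemma~3 for the explicit polynomial-in-$u$ decomposition of $G_{d-1}$. Your route is self-contained: you exploit the second-order ODE $4wC''+2C'+C=0$ for $C(w)=\cos\sqrt{w}$, write $C^{(d)}=\alpha_d C+\beta_d C'$, evaluate the generating function $B(w,s)=\sum_d\beta_d s^d/d!$ via the Wronskian of $C$ and the second solution $S(w)=\sin\sqrt{w}$ to get $B(w,s)=2\sqrt{w}\,\sin(\sqrt{w+s}-\sqrt{w})$, and then apply Lagrange inversion in $s=r(2p+r)$ to read off the binomial coefficients $\binom{2d-2j-2}{d-1}$. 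This is in effect an independent proof of (the $t$-value analogue of) Hoffman's Lemma~3. What the paper's approach buys is brevity, since the hard computation has already been done in~\cite{HEnk}; what your approach buys is that the reader need not consult that reference, and the ODE/Wronskian mechanism makes transparent why exactly two pieces---a polynomial part of degree $\le\lfloor d/2\rfloor$ and a $\tan$-part carrying all the $t(2n-2j)$---appear.
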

The next three cases after \eqref{equ:Tn5} are
\begin{align*}
T(2n,6) &= \frac{21}{512}t(2n)-\frac{7}{192}t(2)t(2n-2)+\frac{1}{256}t(4)t(2n-4),\\
T(2n,7) &= \frac{33}{1024}\gz(2n)-\frac{15}{512}t(2)t(2n-2)
+\frac{1}{256}t(4)t(2n-4)-\frac{1}{21504}t(6)t(2n-6),\\
T(2n,8) &= \frac{429}{16384}t(2n)-\frac{99}{4096}t(2)t(2n-2)+\frac{15}{4096}t(4)t(2n-4)-\frac{1}{12288}t(6)t(2n-6).
\end{align*}

As we mentioned in the above the proof of Theorem~\ref{thm:T2nd} utilizes the
generating function of $T(2n,d)$ defined by
$$\Phi(u,v) =1+\sum_{n\ge d\ge 1} T(2n,d)u^n v^d$$
for which we have the following result.
\begin{thm} \label{thm:gfun}
We have
\begin{equation*}
\Phi(u,v)= \cos(\pi\sqrt{(1-v)u}/2) \sec(\pi\sqrt{u}/2) .
\end{equation*}
\end{thm}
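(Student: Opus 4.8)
The plan is to compute $\Phi(u,v)$ directly as an infinite product and then recognize it through the Euler product for the cosine, following Hoffman's symmetric-function viewpoint. First I would unfold the definition of $T(2n,d)$ by substituting the defining series of each multiple $t$-value and interchanging the (formal) summations. Fixing the depth $d$ and collecting powers of $u$, the constraints $j_1+\cdots+j_d=n$ with all $j_i\ge 1$ together with $n_1>\cdots>n_d\ge 1$ let me factor over the $d$ slots:
$$
\sum_{n\ge d} T(2n,d)\,u^n
=\sum_{n_1>\cdots>n_d\ge 1}\ \prod_{i=1}^d\ \sum_{j_i\ge 1}\frac{u^{j_i}}{(2n_i-1)^{2j_i}}
= e_d(y_1,y_2,\dots),
$$
where $y_k:=\sum_{j\ge 1} u^j(2k-1)^{-2j}=u/\big((2k-1)^2-u\big)$ and $e_d$ denotes the $d$-th elementary symmetric function. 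This identification of the depth-$d$ generating function with $e_d$ is the place where the symmetric-function machinery enters.

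Second, I would assemble the two-variable generating function. Since $\sum_{d\ge 0} e_d(y)\,v^d=\prod_{k\ge 1}(1+vy_k)$ is exactly the defining generating identity for the elementary symmetric functions, and the $d=0$ term $e_0=1$ matches the constant $1$ in the definition of $\Phi$, I obtain
$$
\Phi(u,v)=\sum_{d\ge 0} e_d(y)\,v^d
=\prod_{k\ge 1}\left(1+\frac{vu}{(2k-1)^2-u}\right)
=\prod_{k\ge 1}\frac{(2k-1)^2-(1-v)u}{(2k-1)^2-u}.
$$

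Third, I would recognize each family of factors through the Euler product $\cos(\pi z/2)=\prod_{k\ge 1}\big(1-z^2/(2k-1)^2\big)$. Dividing numerator and denominator of the last product by $(2k-1)^2$, then taking $z=\sqrt{(1-v)u}$ in the numerator and $z=\sqrt{u}$ in the denominator, converts the product into $\cos(\pi\sqrt{(1-v)u}/2)\big/\cos(\pi\sqrt{u}/2)$, which is precisely the claimed formula.

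The routine algebra — the geometric sum giving $y_k$, the factorization of $1+vy_k$, and the cancellation of the $(2k-1)^2$ factors — is immediate. The only point requiring care is the justification of the formal manipulations: interchanging the order of summation and the convergence of the infinite product. For $|u|$ small this is legitimate analytically, and in any event everything may be read as an identity of formal power series in $u$ and $v$, since each coefficient $T(2n,d)$ is extracted from only finitely many terms. Thus the main obstacle is not a deep one; it is simply arranging the bookkeeping so that the passage from the nested sum to $e_d(y)$, and then to the product, is manifestly valid.
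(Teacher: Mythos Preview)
Your argument is correct. Both your approach and the paper's ultimately rest on the Euler product $\cos(\pi z/2)=\prod_{k\ge 1}\bigl(1-z^2/(2k-1)^2\bigr)$, but the packaging differs. The paper works inside the abstract ring $\Sym$: it applies the specialization $\frakT(x_j)=1/(2j-1)^2$ to the identity $\calF(u,v)=E((v-1)u)H(u)$ (Hoffman's Lemma~1), computing $\frakT(E(u))=\cosh(\pi\sqrt{u}/2)$ and $\frakT(H(u))=\sec(\pi\sqrt{u}/2)$ separately. You instead absorb the variable $u$ into new generators $y_k=u/\bigl((2k-1)^2-u\bigr)$ and recognize $\sum_n T(2n,d)u^n=e_d(y)$ directly, so that the single product $\prod_k(1+vy_k)$ already gives $\Phi$; in effect you have reproved Hoffman's Lemma~1 in this special case by the one-line computation $1+vy_k=\bigl(1+(v-1)ux_k\bigr)/(1-ux_k)$. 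Your route is more elementary and self-contained; the paper's route has the advantage that staying in $\Sym$ lets the same homomorphism $\frakT$ be applied to Hoffman's Lemma~2 as well, which is how Theorem~\ref{thm:longform} is obtained.
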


The next theorem involves Euler numbers and
is more useful computationally when the difference between $n$ and $d$ is small.
\begin{thm} \label{thm:longform}
For $d\le n$ we have
\begin{equation}  \label{equ:longform}
T(2n,d)=\frac{(-1)^{n-d}\pi^{2n}}{4^n(2n)!}\sum_{\ell =0}^{n-d}
\binom{n-\ell}{d}\binom{2n}{2\ell}E_{2\ell} .
\end{equation}
\end{thm}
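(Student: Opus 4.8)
The plan is to read off $T(2n,d)$ as the coefficient of $u^nv^d$ in the closed form for $\Phi(u,v)$ supplied by Theorem~\ref{thm:gfun}: since $\Phi(u,v)=1+\sum_{n\ge d\ge 1}T(2n,d)u^nv^d$, for $n\ge d\ge 1$ we have $T(2n,d)=[u^nv^d]\Phi(u,v)$, and the entire argument reduces to a coefficient extraction from
\begin{equation*}
\Phi(u,v)=\cos\Bigl(\tfrac{\pi}{2}\sqrt{(1-v)u}\Bigr)\sec\Bigl(\tfrac{\pi}{2}\sqrt{u}\Bigr).
\end{equation*}
First I would expand the two factors separately. For the secant factor the defining generating function \eqref{equ:EulerNumber} gives, after substituting $x=\tfrac{\pi}{2}\sqrt{u}$ so that $x^{2j}=\pi^{2j}u^j/4^j$,
\begin{equation*}
\sec\Bigl(\tfrac{\pi}{2}\sqrt{u}\Bigr)=\sum_{j\ge 0}\frac{(-1)^j E_{2j}\pi^{2j}}{4^j(2j)!}\,u^j,
\end{equation*}
which is a power series in $u$ alone.

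For the cosine factor the Taylor series of $\cos$ gives
\begin{equation*}
\cos\Bigl(\tfrac{\pi}{2}\sqrt{(1-v)u}\Bigr)=\sum_{k\ge 0}\frac{(-1)^k\pi^{2k}}{4^k(2k)!}\,u^k(1-v)^k,
\end{equation*}
and since the coefficient of $v^d$ in $(1-v)^k$ equals $(-1)^d\binom{k}{d}$ (and vanishes for $k<d$), the $v^d$-coefficient of the cosine factor is the power series $\sum_{k\ge d}\frac{(-1)^{k+d}\pi^{2k}}{4^k(2k)!}\binom{k}{d}u^k$, supported on $k\ge d$.

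Second, I would form the Cauchy product of these two series. Because only the cosine factor carries $v$, the $v^d$-coefficient of $\Phi$ is the product of the $v^d$-coefficient of the cosine factor with the full secant series. Writing the cosine exponent as $k=n-\ell$ and letting $\ell$ denote the secant exponent, the constraint $k\ge d$ becomes $0\le\ell\le n-d$. Combining the prefactors, the two signs collapse as $(-1)^{(n-\ell)+d}(-1)^\ell=(-1)^{n+d}=(-1)^{n-d}$, the powers of $\pi$ multiply to $\pi^{2n}$, and the powers of $4$ to $4^{-n}$, yielding
\begin{equation*}
T(2n,d)=\frac{(-1)^{n-d}\pi^{2n}}{4^n}\sum_{\ell=0}^{n-d}\frac{1}{(2n-2\ell)!\,(2\ell)!}\binom{n-\ell}{d}E_{2\ell}.
\end{equation*}

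Finally, I would normalize the factorials: multiplying and dividing by $(2n)!$ and using $\tfrac{(2n)!}{(2\ell)!\,(2n-2\ell)!}=\binom{2n}{2\ell}$ converts the summand into $\binom{2n}{2\ell}\binom{n-\ell}{d}E_{2\ell}$, which is exactly \eqref{equ:longform}. I do not expect a genuine obstacle here; the argument is pure bookkeeping once Theorem~\ref{thm:gfun} is in hand. The only points demanding care are tracking the two signs so that they combine to $(-1)^{n-d}$, and correctly identifying the summation range $0\le\ell\le n-d$ arising from the condition $k\ge d$ in the cosine expansion, which is precisely where the upper limit $n-d$ (rather than $n$) in \eqref{equ:longform} originates.
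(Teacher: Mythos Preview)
Your argument is correct. Both you and the paper derive the formula by reading off the coefficient of $u^nv^d$ in the product $\cos(\tfrac{\pi}{2}\sqrt{(1-v)u})\sec(\tfrac{\pi}{2}\sqrt{u})$ furnished by Theorem~\ref{thm:gfun}, so the underlying computation is the same. The packaging differs: the paper works in the symmetric-function ring, identifying $\frakT(e_k)$ with the cosine coefficients via Lemma~\ref{lem:Tnn} and $\frakT(h_\ell)$ with the secant coefficients via \eqref{equ:Thn}, and then invokes Hoffman's identity $N_{n,d}=\sum_{\ell}(-1)^{n-d-\ell}\binom{n-\ell}{d}h_\ell e_{n-\ell}$ from \cite{HEnk} to assemble them. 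You instead expand the two trigonometric factors directly and use only the binomial theorem on $(1-v)^k$, which in effect rederives that symmetric-function identity at the level of its image under $\frakT$. Your route is more self-contained (no external lemma needed), while the paper's route makes transparent that \eqref{equ:longform} is the $\frakT$-image of a purely combinatorial relation among $e_k$'s and $h_\ell$'s.
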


This work was started while the the author was visiting Taida Institute for
Mathematical Sciences at National Taiwan University in the summer
of 2012. He would like to thank Prof. Jing Yu and Chieh-Yu Chang for
encouragement and their interest in his work.

\section{Proof of Theorem \ref{thm:gfun} and Theorem \ref{thm:longform}}
We first recall some results on symmetric functions contained in \cite{HEnk,Macdonald} with
some slight modification.
Let $\Sym$ be the subring of $\Q[\![x_1,x_2,\dots]\!]$ consisting
of the formal power series of bounded degree that are invariant
under permutations of the $x_j$.
Define elements $e_j$, $h_j$, and $p_j$ in $\Sym$ by the generating functions
\begin{align*}
E(u)&=\sum_{j=0}^\infty e_ju^j=\prod_{j=1}^\infty (1+ux_j),\\
H(u)&=\sum_{j=0}^\infty h_ju^j=\prod_{j=1}^\infty \frac1{1-ux_j} = E(-u)^{-1},\\
P(u)&=\sum_{j=1}^\infty p_ju^{j-1}=\sum_{j=1}^\infty \frac{x_j}{1-ux_j}=
\frac{H'(u)}{H(u)} .
\end{align*}
Define a homomorphism $\frakT:\Sym\to\R$ such that $\frakT(x_j)=1/(2j-1)^2$ for all $j\ge 1$.
Hence for all $n\ge 1$
$$\frakT(p_n)=t(2n)=\sum_{j\ge 1}\frac{1}{(2j-1)^{2n}}.$$

First we need a simple lemma.
\begin{lem} \label{lem:Tnn}
For any positive integer $n$ let $\{2\}^n$ be the string $(2,\dots,2)$ with $2$
repeated $n$ times. Then we have
\begin{equation}\label{equ:Tnn}
 t(\{2\}^n)=\frac{\pi^{2n}}{4^n(2n)!}.
\end{equation}
\end{lem}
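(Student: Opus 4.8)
The plan is to recognize $t(\{2\}^n)$ as the image under $\frakT$ of the $n$-th elementary symmetric function $e_n$, and then to read off $\frakT(e_n)$ from the classical product expansion of the cosine.

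First I would observe that, directly from the definitions,
\[
\frakT(e_n)=\sum_{k_1<\cdots<k_n}\frac{1}{(2k_1-1)^2\cdots(2k_n-1)^2}=t(\{2\}^n),
\]
since reversing the order of the summation indices identifies the strictly increasing tuples indexing $e_n$ with the strictly decreasing tuples in the definition of $t(\{2\}^n)$. Thus it suffices to evaluate $\frakT(e_n)$.

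Next I would apply $\frakT$ coefficientwise to the generating series $E(-u)=\prod_{j\ge 1}(1-ux_j)$. Using Euler's product for the cosine,
\[
\cos\left(\frac{\pi x}{2}\right)=\prod_{k=1}^\infty\left(1-\frac{x^2}{(2k-1)^2}\right),
\]
and substituting $x=\sqrt{u}$, I obtain
\[
\frakT\bigl(E(-u)\bigr)=\prod_{k=1}^\infty\left(1-\frac{u}{(2k-1)^2}\right)=\cos\left(\frac{\pi\sqrt{u}}{2}\right).
\]
Finally I would expand both sides as power series in $u$. On the left, $\frakT(E(-u))=\sum_{j\ge 0}(-1)^j\frakT(e_j)u^j$, while the Taylor series of cosine gives
\[
\cos\left(\frac{\pi\sqrt{u}}{2}\right)=\sum_{j\ge 0}\frac{(-1)^j\pi^{2j}}{4^j(2j)!}\,u^j.
\]
Comparing the coefficient of $u^n$ yields $\frakT(e_n)=\pi^{2n}/(4^n(2n)!)$, which combined with the first step is exactly \eqref{equ:Tnn}.

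The main obstacle is one of justification rather than computation: I must check that $\frakT$ may be applied to $E(-u)$ coefficientwise and that the resulting series $\sum_j\frakT(e_j)(-u)^j$ genuinely equals the convergent infinite product $\prod_k(1-u/(2k-1)^2)$ on a neighborhood of $u=0$. This is clean because each $\frakT(e_j)$ is a convergent series of positive terms and the product converges uniformly on compact subsets, so the interchange of $\frakT$ with the product is legitimate; it is, however, the only point that requires more than formal manipulation.
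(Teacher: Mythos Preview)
Your proof is correct and follows essentially the same approach as the paper: both compute the generating function $\sum_{n\ge 0} t(\{2\}^n)\,x^n=\prod_{k\ge 1}\bigl(1+x/(2k-1)^2\bigr)$ and identify it with a (hyperbolic) cosine, then read off coefficients. The only cosmetic differences are that you work with $E(-u)$ and invoke Euler's product for $\cos(\pi x/2)$ directly, whereas the paper works with $E(u)$ and obtains $\cosh(\pi\sqrt{x}/2)$ by writing the odd product as the quotient $\prod_{j}(1+x/j^2)\big/\prod_{j}(1+x/(2j)^2)$ and using the $\sinh$ product twice; your framing through $\frakT$ is exactly how the paper uses the lemma immediately afterward.
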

\begin{proof}
It is easy to see that
 \begin{align*}
1+\sum_{n=1}^\infty  t(\{2\}^n) x^n=&
\prod_{j=1}^{\infty} \left (1+ \frac{x}{(2j-1)^2} \right) \\
= & \prod_{j=1}^{\infty} \left (1+ \frac{x}{j^2} \right) /\prod_{j=1}^{\infty} \left (1+ \frac{x}{(2j)^2} \right) \\
=& \frac{\sinh(\pi \sqrt{x})}{\pi \sqrt{x}} \cdot \frac{\pi \sqrt{x}/2} {\sinh(\pi \sqrt{x}/2)}\\
=& \cosh(\pi \sqrt{x}/2) \\
=& \sum_{n=1}^\infty  \frac{\pi^{2n}x^n}{4^n(2n)!} .
 \end{align*}
This finishes the proof of the lemma.
\end{proof}

Now let
$N_{n,d}$ be the sum of all the monomial symmetric functions
corresponding to partitions of $n$ having length $d$.
Then clearly
$$\frakT(N_{n,d})=T(2n,d).$$
As in \cite{HEnk} we may define
\begin{equation*}
     \calF(u,v)=1+\sum_{n\ge d\ge 1}N_{n,d}u^nv^d ,
\end{equation*}
then $\frakT$ sends $\calF(u,v)$ to the generating function
$$\Phi(u,v) =1+\sum_{n\ge d\ge 1} T(2n,d)u^nv^d .$$
By Lemma \ref{lem:Tnn} we have
\begin{equation} \label{base}
\frakT(e_n)= t(\{2\}^n)=\frac{\pi^{2n}}{4^n(2n)!}.
\end{equation}
Hence
\begin{equation*}
\frakT(E(u))=  \cosh(\pi \sqrt{u}/2),
\end{equation*}
and
\begin{equation*}
\frakT(H(u))=\frakT(E(-u)^{-1})=1/\cosh(\pi \sqrt{-u}/2)=\sec(\pi \sqrt{u}/2).
\end{equation*}
Thus by \cite[Lemma 1]{HEnk} $\calF(u,v)=E((v-1)u)H(u)$ and we get
\begin{align*}
\Phi(u,v)=\frakT(E((v-1)u)H(u))
=&\cosh(\pi\sqrt{(v-1)u}/2)\sec(\pi\sqrt{u}/2) \\
=&\cos(\pi\sqrt{(1-v)u}/2)
\!\,\sec(\pi\sqrt{u}/2).
\end{align*}
This proves Theorem \ref{thm:gfun}.

Setting $v=1$ in Theorem \ref{thm:gfun} we obtain
\begin{equation*}
\Phi(u,1)=\sec(\pi\sqrt{u}/2).
\end{equation*}
This yields immediately the following identity by \eqref{equ:EulerNumber}
\begin{equation}  \label{equ:Thn}
\frakT(h_n)=\sum_{d=1}^n T(2n,d)
=\frac{(-1)^{n}E_{2n}\pi^{2n}}{4^n (2n)!} .
\end{equation}

Now by \cite[Lemma 2]{HEnk} we have
\begin{equation*}
N_{n,d}=\sum_{\ell =0}^{n-d}\binom{n-\ell}{d}(-1)^{n-d-\ell}h_\ell e_{n-\ell}.
\end{equation*}
Applying the homomorphism $\frakT$ and using equation \eqref{equ:Tnn} and
\eqref{equ:Thn} we get Theorem~\ref{thm:longform} immediately.

\section{Proof of Theorems \ref{thm:T2nd} and a combinatorial identity}
We now rewrite the generating function $\Phi(4u,v)$ as follows using Theorem \ref{thm:gfun}:
\begin{equation*}
\Phi(4u,v)= \sum_{d\ge 0} v^d \tG_d(u)=\sec(\pi\sqrt{u})\cos(\pi\sqrt{(1-v)u})
=\sec(\pi\sqrt{u})\sum_{j=0}^\infty \frac{\pi^{2j}}{(2j)!} (v-1)^j u^j .
\end{equation*}
Let $D$ be the differential operator with respect to $u$. Then
\begin{align*}
 \tG_d(u)=&(-1)^d \sec(\pi \sqrt{u})\sum_{j\ge d}\frac{(-1)^j\pi^{2j}u^j}{(2j)!}\binom{j}{d} \\
=& \sec(\pi \sqrt{u}) \cdot \frac{(-u)^d}{d!}\cdot
D^d  \sum_{j\ge d}\frac{(-1)^j\pi^{2j} u^j}{(2j)!}  \\
=& \sec(\pi \sqrt{u}) \cdot \frac{(-u)^d}{d!}\cdot
D^d  \cos(\pi \sqrt{u}) \\
=&- \frac{\pi^2}2 \sec(\pi \sqrt{u}) \cdot \frac{(-u)^d}{d!}\cdot
D^{d-1}  \frac{\sin(\pi \sqrt{u}) }{\pi \sqrt{u}} \\
=& \frac{\pi^2 u}{2d} \frac{\tan(\pi \sqrt{u})}{\pi \sqrt{u}} G_{d-1}(u)
\end{align*}
by \cite[(12)]{HEnk} (the definition of $G_k$ is defined on page 9). 
By \cite[Lemma 3]{HEnk} we have
\begin{align} \label{equ:tG1}
 \tG_d(u)= & -\frac{\pi^2 u}{2d} \sum_{j=0}^{\lfloor \frac{d-2}2\rfloor}\frac{(-4\pi^2u)^j}
{2^{2d-3}(2j+1)!}\binom{2d-2j-3}{d-1}    \\
+&\frac{\pi \sqrt{u}}{2d}  \tan(\pi \sqrt{u}) \sum_{j=0}^{\lfloor \frac{d-1}2\rfloor}\frac{(-4\pi^2u)^j}{2^{2d-2}(2j)!}
\binom{2d-2j-2}{d-1} \label{equ:tG2} \\
=&
\frac{\pi \sqrt{u}}{2d}  \tan(\pi \sqrt{u})
\sum_{j=0}^{\lfloor \frac{d-1}2\rfloor}\frac{(-4\pi^2u)^j}{2^{2d-2}(2j)!}
\binom{2d-2j-2}{d-1}
+\text{terms of degree $<d$}. \notag
\end{align}
It is well-dnown that
\begin{equation*}
\tan x= \sum_{m=1}^\infty \frac{(-1)^{m-1} 2^{2m} (2^{2m}-1) B_{2m} x^{2m-1}}{(2m)!}.
\end{equation*}
Hence
\begin{equation*}
\frac{\pi \sqrt{u}}{2}  \tan(\pi \sqrt{u})
= \sum_{m=1}^\infty   4^m   t(2m) u^m .
\end{equation*}
Therefore $T(2n,d)$ is the coefficient of $u^n$ in
$$ \tG_d(u/4)=\frac1d \sum_{m=2}^\infty    t(2m) u^m \sum_{j=0}^{\lfloor \frac{d-1}2\rfloor}\frac{(-\pi^2u)^j}{2^{2d-2}(2j)!}
\binom{2d-2j-2}{d-1}.$$
This implies Theorem~\ref{thm:T2nd} immediately. Notice that by comparing
Theorem~\ref{thm:T2nd} and Theorem~\ref{thm:longform} we get the following
identity of between Bernoulli numbers and Euler numbers.
\begin{thm}
For all $d\le n$
\begin{equation*}
 \sum_{j=0}^{\lfloor \frac{d-1}2\rfloor}
\frac{ (2^{2n-2j}-1)B_{2n-2j} }{2^{2d-1}  d} \binom{2d-2j-2}{d-1} \binom{2n}{2j}
=\frac{(-1)^{n-d}\pi^{2n}}{4^n(2n)!}\sum_{\ell =0}^{n-d}
\binom{n-\ell}{d}\binom{2n}{2\ell}E_{2\ell}.
\end{equation*}
Further we have
\begin{align*}
\ & \sum_{j=0}^{\lfloor \frac{d-1}2\rfloor}
\frac{ (2^{2n-2j}-1)B_{2n-2j} }{2^{2d-1}  d} \binom{2d-2j-2}{d-1} \binom{2n}{2j} \\
=& \left\{
   \begin{array}{ll}
     0, & \hbox{if $n<d<2n$;} \\
     \displaystyle  \frac{n}{2^{2d-1} d}  \binom{2d-2n-1}{d-1} , & \hbox{if $d\ge 2n$.}
   \end{array}
 \right.
\end{align*}
\end{thm}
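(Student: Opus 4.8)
The plan is to read all three assertions off the single generating function $\tG_d(u)$, using its decomposition into the ``tangent part'' \eqref{equ:tG2} and the ``polynomial part'' \eqref{equ:tG1}. The key structural input is that $\Phi(u,v)=1+\sum_{n\ge d\ge1}T(2n,d)u^nv^d$ involves only monomials with $n\ge d$; since $\Phi(4u,v)=\sum_{d\ge0}v^d\tG_d(u)$, this forces $\tG_d(u)=\sum_{n\ge d}T(2n,d)4^nu^n$ for every $d\ge1$. Writing $L(n,d)$ for the common left-hand side of the two displayed identities, we thus have
\begin{equation*}
[u^n]\,\tG_d(u/4)=\begin{cases}T(2n,d),& n\ge d,\\ 0,& n<d,\end{cases}
\end{equation*}
and it is this vanishing below degree $d$ that drives the whole argument.

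First I would record what each part of $\tG_d(u/4)$ contributes to $[u^n]$. From $\frac{\pi\sqrt u}{2}\tan(\pi\sqrt u)=\sum_{m\ge1}4^mt(2m)u^m$ together with $t(2m)=2^{-2m}(2^{2m}-1)\gz(2m)$ and $\gz(2m)=(-1)^{m+1}(2\pi)^{2m}B_{2m}/\bigl(2(2m)!\bigr)$, a direct coefficient extraction shows that the tangent part \eqref{equ:tG2} contributes exactly $\frac{(-1)^{n+1}\pi^{2n}}{(2n)!}L(n,d)$, for all $n$ and $d$ (the condition $m\ge1$ matches the vanishing of the $j\ge n$ terms of $L(n,d)$). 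The polynomial part \eqref{equ:tG1} is a polynomial in $u$ of degree $\lfloor d/2\rfloor$, so its coefficient of $u^n$ vanishes unless $d\ge2n$; when $d\ge2n$ only the term $j=n-1$ survives, contributing $\frac{(-1)^n\,2n\,\pi^{2n}}{2^{2d}d\,(2n)!}\binom{2d-2n-1}{d-1}$.

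The three regimes then follow by matching coefficients. For $d\le n$ the polynomial part has degree $<n$ and is invisible, so $[u^n]\tG_d(u/4)=T(2n,d)$ comes from the tangent part alone; this is exactly the first stated identity once $T(2n,d)$ is also expressed through the Euler numbers via Theorem~\ref{thm:longform}. For $n<d<2n$ both $[u^n]\tG_d(u/4)$ and the polynomial contribution vanish, so $\frac{(-1)^{n+1}\pi^{2n}}{(2n)!}L(n,d)=0$ and hence $L(n,d)=0$. For $d\ge2n$ the coefficient $[u^n]\tG_d(u/4)$ still vanishes, forcing the tangent and polynomial contributions to cancel; solving
\begin{equation*}
\frac{(-1)^{n+1}\pi^{2n}}{(2n)!}L(n,d)+\frac{(-1)^n\,2n\,\pi^{2n}}{2^{2d}d\,(2n)!}\binom{2d-2n-1}{d-1}=0
\end{equation*}
gives $L(n,d)=\frac{2n}{2^{2d}d}\binom{2d-2n-1}{d-1}=\frac{n}{2^{2d-1}d}\binom{2d-2n-1}{d-1}$, the asserted value.

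The only delicate point is the handling of \eqref{equ:tG1}: one must verify that it has degree exactly $\lfloor d/2\rfloor$, so that it is invisible for $d<2n$ and contributes a single clean term for $d\ge2n$, and one must track the sign and power of two in that term carefully. Everything else is forced: once the vanishing of $\tG_d(u)$ below degree $d$ is combined with the two coefficient computations, the main identity and both of its extensions drop out simultaneously.
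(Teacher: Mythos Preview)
Your proposal is correct and follows essentially the same route as the paper. In both arguments the first identity comes from equating the Bernoulli-number form of $T(2n,d)$ (Theorem~\ref{thm:T2nd}, i.e.\ the coefficient of $u^n$ in the tangent part \eqref{equ:tG2}) with the Euler-number form (Theorem~\ref{thm:longform}); and the second identity comes from the observation that $[u^n]\tG_d(u/4)=0$ for $d>n$, so the tangent contribution \eqref{equ:tG2} must cancel the polynomial contribution \eqref{equ:tG1}, which is nonzero only at $j=n-1$ when $d\ge 2n$. Your treatment is actually a bit more careful than the paper's in making the degree bound $\lfloor d/2\rfloor$ on \eqref{equ:tG1} explicit and in separating the three regimes $d\le n$, $n<d<2n$, $d\ge 2n$ cleanly.
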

\begin{proof}
We only need to show the second identity.
Notice that when $d>n$ the coefficient of $u^nv^d$ is 0 in $\Phi(u,v)$. Thus the coefficient
of $u^n$ in $\tG_d(u/4)$ is zero. By \eqref{equ:tG1} and \eqref{equ:tG2} we have
\begin{align*}
& \sum_{j=0}^{\lfloor \frac{d-1}2\rfloor}
\frac{ (2^{2n-2j}-1)B_{2n-2j} }{2^{2d-1}  d} \binom{2d-2j-2}{d-1} \binom{2n}{2j} \\
=& \frac{(-1)^n (2n)!}{(2\pi)^{2n}} \times \text{Coeff.\ of $u^n$ of  
\eqref{equ:tG1} (i.e.\ $j=n-1$)} \\
=&\left\{
   \begin{array}{ll}
     0, & \hbox{$n<d<2n$;} \\
     \displaystyle  \frac{n}{2^{2d-1} d}  \binom{2d-2n-1}{d-1} , & \hbox{$d\ge 2n$,}
   \end{array}
 \right.
\end{align*}
as desired.
\end{proof}

\end{document}